\definecolor{webgreen}{rgb}{0,.5,0}
\definecolor{webbrown}{rgb}{.6,0,0}
\def\modd#1 #2{#1\ \mbox{\rm (mod}\ #2\mbox{\rm )}}
\def\Enn{\mathbb{N}}
\begin{document}

\theoremstyle{plain}
\newtheorem{theorem}{Theorem}
\newtheorem{corollary}[theorem]{Corollary}
\newtheorem{lemma}[theorem]{Lemma}
\newtheorem{proposition}[theorem]{Proposition}

\theoremstyle{definition}
\newtheorem{definition}[theorem]{Definition}
\newtheorem{example}[theorem]{Example}
\newtheorem{conjecture}[theorem]{Conjecture}

\theoremstyle{remark}
\newtheorem{remark}[theorem]{Remark}

\title{Robbins and Ardila meet Berstel}

\author{Jeffrey Shallit\\
School of Computer Science\\
University of Waterloo\\
Waterloo, ON  N2L 3G1 \\
Canada\\
\href{mailto:shallit@uwaterloo.ca}{\tt shallit@uwaterloo.ca}}

\maketitle

\newenvironment{smallarray}[1]
{\null\,\vcenter\bgroup\scriptsize
\renewcommand{\arraystretch}{0.7}%
\arraycolsep=.13885em
\hbox\bgroup$\array{@{}#1@{}}}
{\endarray$\egroup\egroup\,\null}

\begin{abstract}
In 1996, Neville Robbins proved the amazing fact
that the coefficient of
$X^n$ in the Fibonacci infinite product
$$ \prod_{n \geq 2} (1-X^{F_n}) = (1-X)(1-X^2)(1-X^3)(1-X^5)(1-X^8) \cdots
= 1-X-X^2+X^4 + \cdots$$
is always either $-1$, $0$, or $1$.
The same result was proved later by
Federico Ardila using a different method.

Meanwhile, in 2001, Jean Berstel gave a simple 4-state transducer that
converts an ``illegal'' Fibonacci representation into a ``legal'' one.
We show how to obtain the Robbins-Ardila result from Berstel's with almost
no work at all, using purely computational techniques that can
be performed by existing software.
\end{abstract}

\section{Introduction}

The goal of this paper is to show how to prove an amazing 1996 result
of Robbins \cite{Robbins:1996} on the coefficients of a Fibonacci
infinite product, namely that the coefficient of $X^n$ in
$$ \prod_{n \geq 2} (1-X^{F_n}) = (1-X)(1-X^2)(1-X^3)(1-X^5)(1-X^8) \cdots
= 1-X-X^2+X^4 + \cdots$$
is always either $-1, 0, $ or $1$ \cite{Robbins:1996}.  A different
proof was given later by Ardila \cite{Ardila:2004}.  The novelty of
our approach is that it is purely ``computational'',
using algebraic techniques on automata
that can be carried out by existing software, starting
from a construction of Jean Berstel.  With this approach one can also
prove new results (see Section~\ref{new}).

\section{Fibonacci representation}

Let us start with the basics of Fibonacci representation
(also known as Zeckendorf representation) \cite{Lekkerkerker:1952,Zeckendorf:1972}.
Every natural number has an essentially unique representation
as a sum of 
Fibonacci numbers $n = \sum_{0 \leq i < t} e_i F_{i+2}$,
provided that no two consecutive Fibonacci numbers are used.  (Here, as usual,
we write $F_0 = 0$, $F_1 = 1$, and $F_n = F_{n-1} + F_{n-2}$ for
$n \geq 2$.)
If $n$ is written this way,
we define $(n)_F$ to be $e_{t-1} e_{t-2} \cdots e_0$, a binary
string called the {\it canonical
Fibonacci representation} of $n$.   The map $n \rightarrow (n)_F$ gives
a bijection between $\Enn$ and the strings specified by the regular
expression $C_F := \epsilon + 1(0+01)^*$---that is,
the set of all binary
having no two consecutive $1$'s that do not start with $0$.
We also define, for a binary string $x = b_1 \cdots b_t$,
the map $[n]_F := \sum_{1 \leq i \leq t} b_i F_{t-i+2}$.
Thus, for example, $(43)_F = 10010001$ and $[0010001101]_F = 43$.  

\section{From Berstel's transducer to a linear representation}

We start with Berstel's transducer \cite{Berstel:2001}.
When rewritten as a DFA $M$,
it becomes the following:
\begin{figure}[H]
\begin{center}
\includegraphics[width=4in]{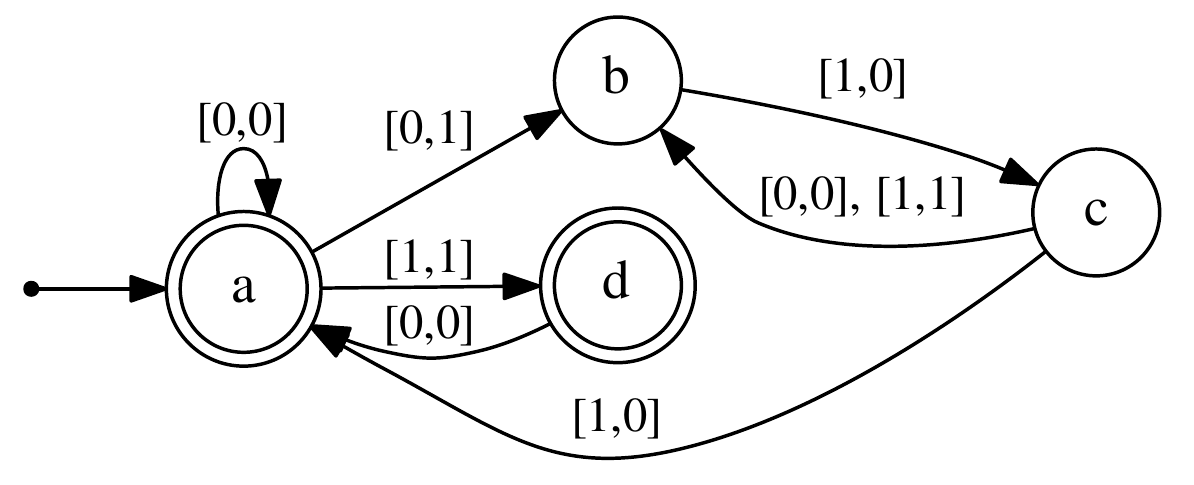}
\end{center}
\caption{Berstel's DFA $M$ for Fibonacci normalization.}
\label{berstel-aut}
\end{figure}
Inputs to $M$ consist of strings of pairs of letters.
The automaton $M$ accepts if the string spelled out by the first
components of the input---an arbitrary string of $0$'s and $1$'s---evaluates
to the same number as the the canonical Fibonacci representation spelled
out by the second components.  (Here, as usual, accepting states
are denoted by double circles.)
More formally, if $x = x_1 \cdots x_i$ and
$y = y_1 \cdots y_i$, define $x \times y$ to
be the string of pairs $[x_1, y_1] \cdots [x_i, y_i]$.
On input $x \times y$,
the automaton accepts iff
\begin{equation} 
x\in \{ 0, 1\}^* \text{ and } y \in 0^*C_F \text{ and }
[x]_F = [y]_F. \label{cond}
\end{equation}

Now suppose $y$ is a canonical Fibonacci representation for
$n$.   Let us count the number $r(n)$ of strings $x$ such that
$M$ accepts $x \times y$.   As Berstel observed, this is 
the number of binary strings $x$ such that $|x|=|y|$ and $[x]_F = [y]_F$.
In other words, this is the number of {\it Fibonacci partitions}
of $n$:  the number of ways to write $n$ as a sum of Fibonacci
numbers, where order does not matter.
For example, $r(8) = 3$, corresponding to
the three accepted strings 
$$ [1,1][0,0][0,0][0,0][0,0], \ 
[0,1][1,0][1,0][0,0][0,0],\ 
[0,1][1,0][0,0][1,0][1,0] $$
and the three Fibonacci partitions $8 = 5+3 = 5+2+1$.

If we now define $(\mu(a))_{i,j}$ as the number of paths
labeled $[*,a]$ (where the star $*$ means any symbol)
from state $i$ to state $j$ of $M$, we get
a so-called {\it linear representation\/} $(v, \mu, w)$ for $r(n)$:
\begin{align*}
v &= \left[ \begin{array}{cccc} 
1 & 0 & 0 & 0
\end{array} \right]; \quad &
\mu(0) &= \left[ \begin{array}{cccc}
1 & 0 & 0 & 0 \\
0 & 0 & 1 & 0 \\
1 & 1 & 0 & 0 \\
1 & 0 & 0 & 0
\end{array}\right]; \quad &
\mu(1) &= \left[ \begin{array}{cccc}
0 & 1 & 0 & 1 \\
0 & 0 & 0 & 0 \\
0 & 1 & 0 & 0 \\
0 & 0 & 0 & 0
\end{array}\right]; \quad &
w &= \left[ \begin{array}{c}
1 \\
0 \\
0 \\
1
\end{array}\right].
\end{align*}
Here $\mu$ is a morphism, that is, a map satisfying
$\mu(x)\mu(y) = \mu(xy)$ for all strings $x$ and $y$.
If $(n)_F = x$, then $r(n) = v \mu(x) w$.  This gives a
very efficient way to compute $r(n)$:  write $n$ as its canonical Fibonacci
representation $x$, multiply the matrices $\mu(0)$ and $\mu(1)$ according to the
bits of $x$, and then pre- and post-multiply by the
vectors $v$ and $w$.  The {\it rank\/} of a linear representation $(v, \mu, w)$
is the dimension of the vector $v$; in this case it is $4$.

Notice that $r(n)$ is just the coefficient of $X^n$ in the
following Fibonacci power series:
$$ \prod_{i \geq 2} (1+X^{F_n}) = (1+X)(1+X^2)(1+X^3)(1+X^5)(1+X^8) \cdots .$$
Of course, $r(n)$ is unbounded.

Robbins took this power series and modified it to
\begin{align}
\prod_{i \geq 2} (1-X^{F_n}) &= (1-X)(1-X^2)(1-X^3)(1-X^5)(1-X^8) \cdots 
	\nonumber \\
&= \sum_{n \geq 0} a(n) X^n  ,  \label{robb}
\end{align} so that
$a(n)$ is the coefficient of $X^n$ in this new series.
He observed that if $r_e(n)$ is the number
of Fibonacci partitions using an even number of terms, and
$r_o (n)$ is the number of Fibonacci partitions using an odd
number of terms, then clearly $r(n) = r_e(n) + r_o(n)$.  Furthermore,
Robbins noted that Eq.~\eqref{robb} gives $a(n) = r_e (n) - r_o (n)$.
By adding these two equations we get
$a(n) = 2r_e(n) - r(n)$.
Since we already know how to compute $r(n)$, to compute $a(n)$
we only need to know $r_e (n)$.

We can find a linear representation for $r_e(n)$ using a trivial
modification of Berstel's automaton.   It suffices to create a new
automaton $M'$ accepting those pairs $x \times y$ exactly as
before, but constrained by the number of $1$'s in $x$ being even.
This amounts to performing a cross product construction of $M$ with the
following simple automaton, where again $*$ matches any symbol:
\begin{figure}[H]
\begin{center}
\includegraphics[width=2.5in]{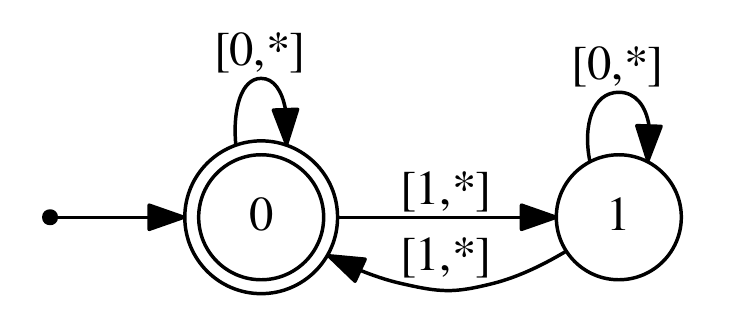}
\end{center}
\caption{Automaton for even number of $1$'s in the first component.}
\end{figure}
\noindent This cross product can be
computed automatically with software that manipulates
automata, such as {\tt Grail} \cite{Raymond&Wood:1994}.

This gives the
new automaton $M'$ below, which accepts those inputs over the alphabet
$\Sigma_2^*$ that satisfy
the condition \eqref{cond} and also have an even number of $1$'s
in the first components.  The names of the states match those
in Fig.~\ref{berstel-aut}, together with the parity ($0$ or $1$) of
the number of $1$'s in the first coordinate.
\begin{figure}[H]
\begin{center}
\includegraphics[width=6.5in]{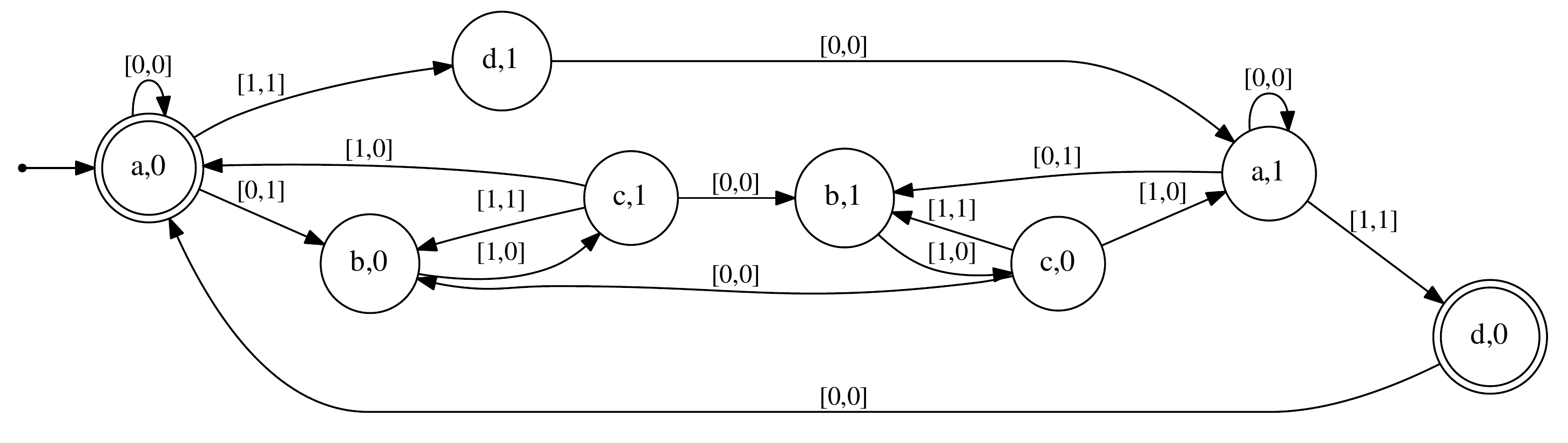}
\end{center}
\caption{$M'$:  Berstel's automaton modified.}
\label{berstel-aut2}
\end{figure}

The next step is to find the linear representation 
corresponding to the automaton $M'$.
It is $(v', \mu', w')$, as given below.
Again, this can be computed ``automatically'' just by counting paths
in the transition diagram of $M'$.
\begin{align*}
v &= \left[ \begin{smallarray}{cccccccc} 
1&0&0&0&0&0&0&0
\end{smallarray} \right]; \quad &
\mu'(0) &= \left[ \begin{smallarray}{cccccccc}
1&0&0&0&0&0&0&0\\
0&1&0&0&0&0&0&0\\
0&0&0&0&0&1&0&0\\
0&0&0&0&1&0&0&0\\
0&1&1&0&0&0&0&0\\
1&0&0&1&0&0&0&0\\
1&0&0&0&0&0&0&0\\
0&1&0&0&0&0&0&0
\end{smallarray}\right]; \quad &
\mu'(1) &= \left[ \begin{smallarray}{cccccccc}
0&0&1&0&0&0&0&1\\
0&0&0&1&0&0&1&0\\
0&0&0&0&0&0&0&0\\
0&0&0&0&0&0&0&0\\
0&0&0&1&0&0&0&0\\
0&0&1&0&0&0&0&0\\
0&0&0&0&0&0&0&0\\
0&0&0&0&0&0&0&0
\end{smallarray}\right]; \quad &
w' &= \left[ \begin{smallarray}{c}
1\\
0\\
0\\
0\\
0\\
0\\
1\\
0
\end{smallarray}\right].
\end{align*}
This representation has rank $8$.

One can also use the program {\tt Walnut}, written by
Hamoon Mousavi \cite{Mousavi:2016}, to produce
the linear representation directly.  We give the details now.
First, we write a {\tt Walnut} regular expression specifying that $x$ has an
even number of $1$'s:
\begin{verbatim}
reg even1 "0*(10*10*)":
\end{verbatim}

Next, after having stored the automaton in Figure~\ref{berstel-aut} in
{\tt Walnut} format in the directory {\tt Automata Library}, under
the name {\tt \$berst}, we issue the following command
\begin{verbatim}
eval fibeven y "?msd_fib $even1(x) & $berst(x,y)":
\end{verbatim}
The linear representation can then be found in the file
{\tt fibeven.mpl} in the {\tt Result} directory.

Next, we can construct a linear representation for the function
$a(n) = 2 r_e (n) - r(n)$, by just combining the ones for 
$r_e$ and $r$:
\begin{align*}
v'' &= \left[ \begin{array}{cc} 
2v & -v'
\end{array} \right]; &
\mu''(0) &= \left[ \begin{array}{cc}
\mu(0) & 0 \\
0 & \mu'(0)
\end{array}\right];  &
\mu''(1) &= \left[ \begin{array}{cc}
\mu(1) & 0 \\
1 & \mu'(1)
\end{array}\right];  &
w'' &= \left[ \begin{array}{c}
w \\
w'
\end{array}\right].
\end{align*}
This gives us a linear representation for $a(n)$ of rank $12$.

Next, we minimize this linear representation, using the algorithm
given by Berstel and Reutenauer \cite{Berstel&Reutenauer:2010}.
We get the equivalent rank-$4$ linear representation $(y,\gamma, z)$
for $a(n)$ below:
\begin{align*}
y &= \left[ \begin{array}{cccc} 
1&0&0&0
\end{array} \right]; &
\gamma(0) &= \left[ \begin{array}{cccc}
1& 0& 0& 0 \\
0& 0& 1& 0 \\
0& 0& 0& 1 \\
-1& 0&-1& 0
\end{array}\right];  &
\gamma(1) &= \left[ \begin{array}{cccc}
0& 1& 0& 0 \\
0& 0& 0& 0 \\
0&-1& 0&-1\\
0& 0& 0& 0
\end{array}\right];  &
z &= \left[ \begin{array}{c}
1 \\
-1\\
-1\\
0
\end{array}\right].
\end{align*}

\section{Finishing up the proof}

Finally, we can use breadth-first search
(aka the ``semigroup trick'' of
\cite{Du&Mousavi&Schaeffer&Shallit:2016}) to verify that the set of all
products of the form $y \, \gamma(x)$, $x \in \{ 0, 1 \}^*$ is finite.
We find that the resulting semigroup $S$ is of cardinality $15$.
(We remark that the semigroup generated by the
two matrices $\gamma(0)$ and $\gamma(1)$ has cardinality $207$.)
Each member of $S$ is a vector, and we can easily check that
the dot product of each vector with $z$ gives only $0, 1, -1$.
The result of Robbins is now
proved.   Furthermore,
the linear representation $(y, \gamma, z)$
provides a simple algorithm to compute $a(n)$.

\section{Going further} 

We can prove even more.   The semigroup $S = \{ y \, \gamma(x) \, : \, 
x\in \{0, 1 \}^*\}$ allows us to construct a finite automaton $A$ that
computes $a(n)$ in the following
way:  on input the canonical Fibonacci representation
$(n)_F$, the automaton arrives at a state with output $a(n)$.
Here the 
states are named $y \, \gamma(x)$ for some $x$,
the initial state is $y$, transitions are given
by $\delta(u, a) = u \cdot \gamma(a)$,
and the output of the state named
$y \, \gamma(x)$ is $y\, \gamma(x) z$.
The automaton $A$ is hence algorithmically constructible,
and the result is displayed below:
\begin{figure}[H]
\begin{center}
\includegraphics[width=5.4in]{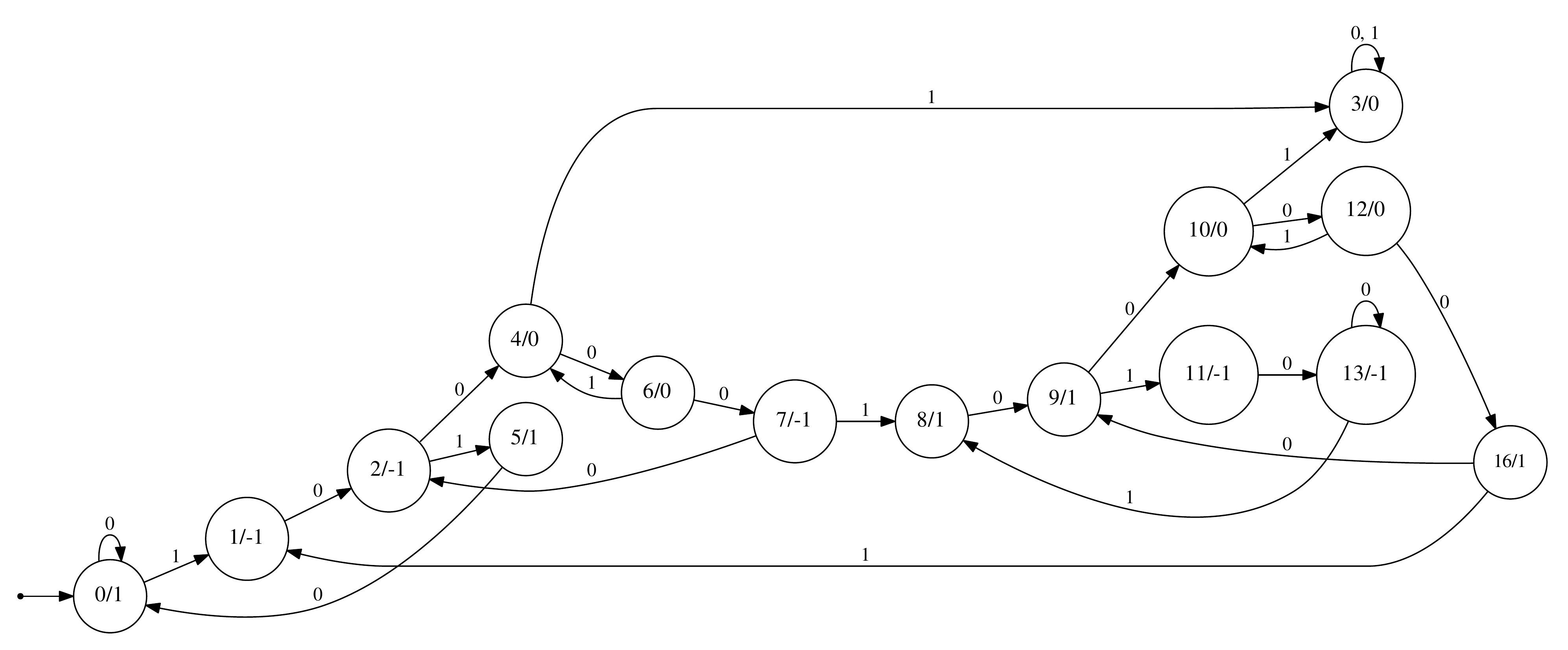}
\end{center}
\caption{The automaton $A$ computing $a(n)$}
\end{figure}
The automaton $A$ gives us a lot of information about how
$a(n)$ behaves.   For example, Ardila proved that ``almost all''
$n$ have $a(n) = 0$.  We can easily prove this using the DFA $A$
as follows:   clearly, almost all natural numbers $n$ have a Fibonacci
representation containing the block $t = 01001001$.  Now all
we have to check is that $t$ is a {\it synchronizing word} (see \cite{Volkov:2008})
for $A$:  the action of $t$ on every state $q$ maps $q$
to state $3$, which has output $0$.

Furthermore, the representations we have obtained for
$r(n)$, $r_e(n)$, and $a(n)$ give us most of the other
results of Robbins, without the need for inductions or case analysis.
\begin{theorem}
\leavevmode
\begin{itemize}
\item[(a)]  $r(F_n) = \lfloor n/2 \rfloor$ for $n \geq 2$.
\item[(b)] $r_e(F_n) = \lfloor n/4 \rfloor$ for $n \geq 1$.
\item[(c)] $a(F_n - 1) = \begin{cases}
	1, & \text{if $n \equiv \modd{1,2} {4}$;}\\
	-1, & \text{if $n \equiv \modd{0,3} {4}$;}
	\end{cases}$ and $n \geq 1$.
\end{itemize}
\end{theorem}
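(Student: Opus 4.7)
The plan is to exploit the very simple Fibonacci expansions of $F_n$ and $F_n-1$ together with the linear representations we already have for $r$, $r_e$, and $a$. Since $(F_n)_F = 10^{n-2}$, parts (a) and (b) immediately reduce to computing the scalars $v\,\mu(1)\mu(0)^{n-2}w$ and the analogous product for the rank-$8$ representation of $r_e$. For (c), the classical identities $F_{2k}-1 = F_3+F_5+\cdots+F_{2k-1}$ and $F_{2k+1}-1 = F_2+F_4+\cdots+F_{2k}$ yield the clean forms $(F_{2k}-1)_F = (10)^{k-1}$ and $(F_{2k+1}-1)_F = (10)^{k-1}1$, so that $a(F_{2k}-1) = y\,N^{k-1}z$ and $a(F_{2k+1}-1) = y\,N^{k-1}\gamma(1)z$, where $N := \gamma(1)\gamma(0)$.

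In each of the three cases the quantity of interest therefore has the form $\alpha\,T^{k}\beta$ for a fixed row vector $\alpha$, column vector $\beta$, and a single square matrix $T$. By Cayley--Hamilton applied to $T$, the resulting sequence in $k$ must satisfy a linear recurrence with constant coefficients coming from the characteristic (or minimal) polynomial of $T$. For (a), a quick computation gives $\det(tI-\mu(0)) = t(t-1)^2(t+1) = t^4 - t^3 - t^2 + t$, yielding the recurrence $r(F_n) = r(F_{n-1}) + r(F_{n-2}) - r(F_{n-3})$ valid for $n \geq 6$; since $\lfloor n/2\rfloor$ satisfies the same recurrence, (a) reduces to checking the initial values $n = 2,3,4,5$. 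Part (b) is handled identically with $T = \mu'(0)$: compute its characteristic polynomial, note that $\lfloor n/4 \rfloor$ satisfies the induced recurrence, and match a handful of initial values.

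Part (c) is especially tidy. Multiplying out $N = \gamma(1)\gamma(0)$ produces an extremely sparse matrix whose only nonzero entries are $N_{13} = N_{31} = 1$, from which a direct computation gives $N^3 = N$, so that $N^{k-1}$ is $2$-periodic in $k$. Evaluating $y\,N^{k-1}z$ and $y\,N^{k-1}\gamma(1)z$ on the two parities of $k-1$ produces exactly the four values demanded by the congruence of $n$ modulo $4$; the tiny cases $n = 1, 2$, in which $(F_n-1)_F$ is the empty string or a single bit, are handled by hand.

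The real ``obstacle'' here is nothing more than careful bookkeeping in the matrix arithmetic: once the shapes of $(F_n)_F$ and $(F_n-1)_F$ are recorded, every remaining step is a direct linear-algebra calculation. Indeed, each of the three assertions is expressible as a first-order statement about Fibonacci representations, so the entire theorem could alternatively be dispatched automatically with a single \texttt{Walnut} command.
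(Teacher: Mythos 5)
Your proposal is correct and follows essentially the same route as the paper: parts (a) and (b) both reduce to powers of $\mu(0)$ and $\mu'(0)$ and invoke their minimal/characteristic polynomials (you verify the induced recurrence against $\lfloor n/2\rfloor$ and $\lfloor n/4\rfloor$ plus initial values, while the paper solves for the constants in the closed form---an interchangeable finish), and your part (c), computing $N=\gamma(1)\gamma(0)$ with $N^3=N$, is exactly the path $1010\cdots$ traced in the paper's automaton $A$, made explicit. The only quibble is the closing aside: (a) and (b) concern values of counting functions, not first-order predicates over Fibonacci representations, so they are not literally one \texttt{Walnut} command, but this does not affect the proof.
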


\begin{proof}
\leavevmode
\begin{itemize}
\item[(a)]
Since $(F_n)_F = 1 \, 0^{n-2}$, it follows that
$r(F_n) = v \mu(1) \mu(0)^{n-2} w$.  Hence $r(F_n)$
is a linear combination of the entries of $\mu(0)^{n-2}$.
But each entry of the matrices $\mu(0)^t$, considered as a sequence
indexed by $t$, satisfies a linear
recurrence whose annihilating polynomial is the minimal
polynomial of $\mu(0)$, and hence so do the values $r(F_n)$.
We can use computer algebra
software, such as {\tt Maple}, to compute this minimal
polynomial; it is $X(X+1)(X-1)^2$.  By
the fundamental theorem of linear recurrences we know
that $r(F_n) = c_1 n + c_2 + c_3 (-1)^n$ for $n \geq 2$.  Solving
for the constants gives us $c_1 = 1/2$, $c_2 = -1/4$, and
$c_3 = 1/4$.   Hence $r(F_n) = \lfloor n/2 \rfloor$ for 
$n \geq 2$, as desired.

\item[(b)] Here we play the same game, but for the linear
representation $(v', \mu', w')$.   We get a minimal
polynomial of $X(X+1)(X^2+1)(X-1)^2$ for $\mu'(0)$.   Hence
$r_e(F_n) = c_1 n + c_2 + c_3 (-1)^n + c_4 i^n + c_5 (-i)^n$,
where $i = \sqrt{-1}$.   Solving for the constants
gives $c_1 = 1/4$, $c_2 = -3/8$, $c_3 = 1/8$, $c_4 = 1/8-i/8$,
$c_5 = 1/8 + i/8$, which gives the desired result.

\item[(c)]  The Fibonacci representation for $F_n - 1$ is given
by $(10)^{n/2 - 1}$  if $n \geq 2$ is even, and $(10)^{(n-3)/2} 1$ if
$n \geq 3$ is odd.   Examining the path in the automaton $A$ labeled
by $101010\cdots$, the result follows immediately.
\end{itemize}
\end{proof}

The same ideas can be used to easily prove the 
equality $r(F_n^2-1) = F_n$ for $n \geq 2$ from \cite{Stockmeyer:2008} and
to prove the theorems in \cite{Bicknell-Johnson&Fielder:1999}.

\section{Some new results}
\label{new}

Another advantage to this method is that with exactly the same
techniques, we can go on to study
additional variations on the Fibonacci infinite product.  For example, 
we could study Fibonacci partitions with the number of parts congruent to
$0, 1,$ or $2$ (mod $3$).

Exactly the same computational techniques we have described 
here then easily prove the following new result.  
Let $r_{m,i} (n)$ denote the number of Fibonacci partitions of
$n$ having the number of parts 
congruent to $i$ (mod $m$).
\begin{theorem}
We have $r_{3,i}(n) - r_{3,i+1} (n) \in \{ -1, 0, 1\}$ for $i \in \{0,1,2 \}$.
\end{theorem}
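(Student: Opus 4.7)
The plan is to run the same pipeline that proved Robbins' result, but replace the mod-$2$ parity constraint on the number of $1$'s by a mod-$3$ residue constraint. For each $i \in \{0,1,2\}$, I would take the cross product of Berstel's DFA $M$ (Figure~\ref{berstel-aut}) with the $3$-state cyclic counter that accepts precisely when the number of $1$'s in the first coordinate is $\equiv i \pmod 3$. This yields a rank-at-most-$12$ linear representation $(v_i, \mu_i, w_i)$ for $r_{3,i}(n)$, obtained automatically either by a Grail cross-product construction or by a Walnut \texttt{eval} command of the same shape as the one used to define \texttt{fibeven} earlier.

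Next, for each $i$, I would assemble a block linear representation for the difference $d_i(n) := r_{3,i}(n) - r_{3,\,(i+1) \bmod 3}(n)$ by direct analogy with the construction of $(v'', \mu'', w'')$ for $a(n) = 2 r_e(n) - r(n)$ in the previous section:
\[
y_i = \bigl[\, v_i \ \ -v_{(i+1) \bmod 3}\,\bigr], \quad
\gamma_i(a) = \begin{bmatrix} \mu_i(a) & 0 \\ 0 & \mu_{(i+1)\bmod 3}(a) \end{bmatrix}, \quad
z_i = \begin{bmatrix} w_i \\ w_{(i+1)\bmod 3} \end{bmatrix}.
\]
Applying the Berstel--Reutenauer minimization algorithm then produces a minimal representation $(y_i', \gamma_i', z_i')$ for $d_i$. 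Finally, I would invoke the semigroup trick: perform breadth-first search to enumerate the orbit $S_i := \{\, y_i'\, \gamma_i'(x) : x \in \{0,1\}^*\,\}$, and if it is finite, check by direct computation that $u \cdot z_i' \in \{-1, 0, 1\}$ for every $u \in S_i$. Since $d_0(n) + d_1(n) + d_2(n) = 0$ identically, it in fact suffices to carry this out for any two of the three values of $i$.

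The main obstacle is that the termination of the breadth-first search --- equivalently, the finiteness of the orbit $S_i$ --- is not guaranteed a priori; it is precisely the condition that $d_i(n)$ is Fibonacci-automatic. In the mod-$2$ case the analogous orbit had size $15$ with a minimized rank of $4$, so one expects the mod-$3$ case to yield somewhat larger but still perfectly manageable numbers, well within the capabilities of the software already used. If the empirical computation confirms finiteness of the $S_i$ and produces only the values $-1, 0, 1$ as dot products with $z_i'$, the theorem is established. A secondary sanity check is that, just as in the Robbins case, one can extract from each $S_i$ an explicit synchronizing DFA computing $d_i(n)$ from the canonical Fibonacci representation $(n)_F$, thereby providing an efficient algorithm and opening the door to deriving further ``almost all'' statements in the spirit of Ardila's result.
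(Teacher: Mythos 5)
Your proposal follows essentially the same route as the paper: build linear representations for the $r_{3,i}$ by crossing Berstel's automaton with a mod-$3$ counter on the $1$'s of the first component, form the block representation for each difference, minimize, and finish with the semigroup trick (the paper reports a semigroup of size $61$). The only cosmetic difference is that the paper builds one cross-product automaton and obtains all three residue classes by changing the accepting-state vector $w$, rather than running three separate constructions; your observation that $d_0+d_1+d_2=0$ reduces the work to two cases is a nice small addition.
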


\begin{proof}
We use the following {\tt Walnut} commands to construct the automata
and matrices:
\begin{verbatim}
reg three1 {0,1} "0*(10*10*10*)*":
def fib3 "?msd_fib $three1(x) & $berst(x,y)":
eval fib3m y "?msd_fib $fib3(x,y)":
\end{verbatim}
Once we have a linear representation $(v, \mu, w)$
for $r_{3,0} (n)$, we can
easily construct ones for $r_{3,1} (n)$ and $r_{3,2} (n)$, by
modifying the final states specified in the vector $w$, and
then one can easily construct linear representations for
the difference $r_{3,i} (n) - r_{3,i+1} (n)$ for $i = 0, 1, 2$.
Then we proceed as before, minimizing the linear representations,
and using the ``semigroup trick'' to prove finiteness.  In this
case the size of the semigroup is $61$.
\end{proof}

However, for number of parts modulo $4$, the boundedness
property no longer holds.
The following result is also easily proved by our method:
\begin{theorem}
Let $d(n) = r_{4,0}(n) - r_{4,2} (n)$.  Then $d(n) = -16^k$
for $n = [(100)^{8k+1}1]_F$ and $d(n) = 4\cdot 16^k$
for $n = [(100)^{8k+5}1]_F$.
\end{theorem}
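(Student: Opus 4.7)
The plan is to mirror the methodology of the previous sections: build a linear representation for $d(n)$ via \texttt{Walnut}, then exploit the self-similar form of the inputs $[(100)^m 1]_F$ by reducing the computation to powers of a single matrix, and finally read off the closed form from a linear recurrence.

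First, I would use a \texttt{Walnut} regular expression matching strings whose number of $1$'s is divisible by $4$, such as
\begin{verbatim}
reg four1 {0,1} "0*(10*10*10*10*)*":
\end{verbatim}
and cross it with Berstel's automaton exactly as in the modulo-$3$ argument to obtain a linear representation $(v_0, \mu_0, w_0)$ for $r_{4,0}(n)$. Shifting the accepting-state vector within the same product construction yields a parallel representation $(v_2, \mu_2, w_2)$ for $r_{4,2}(n)$ (same morphism, modified $w$). Taking the direct sum with the appropriate sign gives a linear representation $(v, \mu, w)$ for $d(n) = r_{4,0}(n) - r_{4,2}(n)$, which I would minimize using the Berstel--Reutenauer algorithm.

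Second, since $(100)^m 1$ contains no two consecutive $1$'s and does not begin with $0$, it is a canonical Fibonacci representation. By the morphism property of $\mu$,
\[
d\bigl([(100)^m 1]_F\bigr) \;=\; v\, \mu(100)^m\, \mu(1)\, w \;=\; v\, A^m\, u,
\]
where $A := \mu(100) = \mu(1)\mu(0)^2$ and $u := \mu(1)\,w$. The theorem thus reduces to a statement about the scalar sequence $s_m := v A^m u$ along the arithmetic progressions $m \equiv 1 \pmod 8$ and $m \equiv 5 \pmod 8$. By the fundamental theorem of linear recurrences (as in part~(a) of the previous theorem), $s_m$ is a linear combination of $m$th powers of eigenvalues of $A$, with multiplicities governed by the minimal polynomial of $A$. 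The target formulas $s_{8k+1} = -16^k$ and $s_{8k+5} = 4 \cdot 16^k$ force $X^8 - 16$ to divide the annihilator of $u$ under $A$, so the relevant eigenvalues are the eight eighth-roots of $16$.

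Third, rather than solving for all spectral coefficients in full, the cleanest conclusion is to verify mechanically (via \texttt{Maple}) that $s_{m+8} = 16 \cdot s_m$ for all $m$ in a set of initial indices larger than the rank of the minimized representation; since $s_{m+8} - 16 s_m$ itself satisfies a linear recurrence of bounded order, checking finitely many values certifies that it vanishes identically. Once this periodicity-with-scaling is established, the two infinite families collapse to their base cases $d([1001]_F) = -1$ and $d([(100)^5 1]_F) = 4$, which are confirmed by direct matrix multiplication.

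The main obstacle is the spectral verification---one must confirm that $A$ genuinely exhibits the $8$-periodic scaling-by-$16$ behavior the statement requires. This is a finite symbolic computation with the matrices produced in step~one, and is entirely mechanical; the rest of the argument is bookkeeping with two explicit initial values.
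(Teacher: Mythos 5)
Your proposal is correct and follows the paper's construction up to and including the minimized rank-$8$ linear representation for $d(n)$; the only divergence is in how the closed forms are extracted at the end. The paper finishes by writing down the explicit matrix $\mu\bigl((100)^{8n+5}1\bigr)$ and proving by induction that its entries scale by $16$ as the exponent increases by $8$, from which $d(n)$ is read off directly. You instead pass to the scalar sequence $s_m = v\,\mu(100)^m\mu(1)\,w$ and certify the recurrence $s_{m+8}=16\,s_m$ by a finite check, on the grounds that $s_{m+8}-16\,s_m$ is annihilated by the minimal polynomial of $\mu(100)$, whose degree is at most $8$; this is sound, and it is the same device the paper itself uses in parts (a) and (b) of the preceding theorem, so your route is arguably more uniform with the rest of the paper and spares you the large explicit matrix. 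One small caveat: the theorem only constrains the residues $m\equiv 1,5\pmod 8$, so if the global identity $s_{m+8}=16\,s_m$ happened to fail at some other residue class, your check would fail even though the theorem is still true; the robust version is to apply the identical finite-check argument separately to the subsequences $k\mapsto s_{8k+1}$ and $k\mapsto s_{8k+5}$, each of which is annihilated by the minimal polynomial of $\mu(100)^8$. With that adjustment (or with the global check simply succeeding, as the paper's displayed matrix suggests it does), your argument, together with the two base cases $d([1001]_F)=-1$ and $d([(100)^5 1]_F)=4$, is complete.
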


\begin{proof}
We use the same ideas.  This time the minimized linear
representation for $d(n)$ has rank $8$:
\begin{align*}
v &= \left[ \begin{smallarray}{cccccccc} 
1&0&0&0&0&0&0&0
\end{smallarray} \right]; \quad &
\mu(0) &= \left[ \begin{smallarray}{rrrrrrrr}
1& 0& 0& 0& 0& 0& 0& 0 \\
0& 0& 1& 0& 0& 0& 0& 0 \\
0& 0& 0& 1& 0& 0& 0& 0 \\
0& 0& 0& 0& 0& 1& 0& 0 \\
 0& 0&-1& 0& 0& 1& 0& 0\\
-1&-1& 0& 0& 1& 0&-1& 0\\
-1& 0&-1&-1& 0& 1& 1&-1\\
-1& 0&-1& 0& 0& 0& 0& 0
\end{smallarray}\right]; \quad &
\mu(1) &= \left[ \begin{smallarray}{cccccccc}
0&1&0&0&0&0&0&0\\
0&0&0&0&0&0&0&0\\
0&0&0&0&1&0&0&0\\
0&0&0&0&0&0&1&0\\
0&0&0&0&0&0&0&0\\
0&0&0&0&0&0&0&1\\
0&0&0&0&0&0&0&0\\
0&0&0&0&0&0&0&0
\end{smallarray}\right]; \quad &
w &= \left[ \begin{smallarray}{c}
1 \\
0 \\
0 \\
-1\\
-1\\
-1\\
-1\\
-1
\end{smallarray}\right].
\end{align*}
It is now easy to prove by induction that
$$\mu( (100)^{8n+5} 1) =
\left[ \begin{smallarray}{cccccccc}
0& 0& 0& 0& 0& 0& -4\cdot 16^k& 0 \\
0& 0& 0& 0& 0& 0& 0& 0 \\
0& 4 \cdot 16^k& 0& 0& 0& 0& 4 \cdot 16^k& 0 \\
0& 8 \cdot 16^k& 0& 0& 0& & 8 \cdot 16^k& 0 \\
 0& 0& 0& 0& 0& 0& 0& 0\\
 0& 4 \cdot 16^k& 0& 0& 0& 0& 4 \cdot 16^k& 0\\
 0& 0& 0& 0& 0& 0& 0& 0\\
 0& 0& 0& 0& 0& 0& 0& 0
\end{smallarray}\right],
$$ 
from which the claim
$d(n) = 4\cdot 16^k$
for $n = [(100)^{8k+5}1]_F$ follows immediately.
The other is handled similarly.
\end{proof}

\end{document}